\newcommand{\R}{\mathbb{R}} 
\newcommand{\Z}{\mathbb{Z}} \newcommand{\C}{\mathbb{C}}
\newcommand{\T}{\mathbb{T}} \newcommand{\GT}{\mathcal{W}}
\newcommand{\W}{\mathcal{W}_n}
\newcommand{\bal}{\boldsymbol\theta}
\newcommand{\bth}{\boldsymbol\theta} \newcommand{\al}{\alpha}
\newcommand{\be}{\beta} \newcommand{\la}{\lambda}
\newcommand{\ga}{\gamma} \newcommand{\si}{\sigma}
\newcommand{\prodF}{\prod F(\theta_j^{-1})}
\newcommand{\LR}{c_{\lambda\mu}^{\tau}}
\newcommand{\ts}{\tau\sigma^{-1}(j)}
\newcommand{\sgn}{\text{sgn}}
\newcommand{\con}{\text{Con}(\sigma,\tau)}
\newcommand{\conb}{\text{Con}'(\sigma,\tau)}
\newcommand{\ze}{\zeta}
\newtheorem{theorem}{Theorem}[section]
\newtheorem{proposition}[theorem]{Proposition}
\newtheorem{lemma}[theorem]{Lemma}
\begin{document} \title{Two constructions of Markov chains on the dual of $U(n)$} \author{Jeffrey Kuan}

\date{}

\maketitle

\abstract{We provide two new constructions of Markov chains
which had previously arisen from the representation theory of
$U(\infty)$. The first construction uses the combinatorial rule
for the Littlewood--Richardson coefficients, which arise from
tensor products of irreducible representations of the unitary
group. The second arises from a quantum random walk on the von
Neumann algebra of $U(n)$, which is then restricted to the
center. Additionally, the restriction to a maximal torus can be
expressed in terms of weight multiplicities, explaining the
presence of tensor products.}

\section{Introduction} 

This paper will examine a family of Markov processes on the state space 
$
\{\lambda_1 \geq \ldots \geq \lambda_n: \lambda_i \in \Z\}.
$
These Markov processes were first introduced as the Charlier process in \cite{KOR}. In \cite{kn:BF}, the authors introduce a
family of Markov chains on the Gelfand--Tsetlin set
$\mathbb{GT}$. This is the set of infinite sequences
$\la^{(1)}\prec\la^{(2)}\prec\ldots,$ where
$\la^{(k)}=(\la^{(k)}_1\geq\ldots\geq \la^{(k)}_k)$ is a
$k$--tuple of nonincreasing integers and $\lambda\prec\mu$
denotes the condition
$\mu_1\geq\lambda_1\geq\mu_2\geq\lambda_2\geq\ldots\mu_n\geq\lambda_n$.
By considering the map $$ \la^{(1)}\prec\la^{(2)}\prec\ldots
\mapsto \{(\la_i^{(k)}-i,k)\}_{1\leq i\leq k<\infty}\subset
\Z\times\Z_+, $$ these Markov chains define an interacting
particle system on $\Z\times\Z_+$. Drawing lozenges around each
particle yields a random tiling of the half plane. Furthermore,
the condition $\la^{(k)}\prec\la^{(k+1)}$ ensures that there is
a natural interpretation as a randomly growing stepped surface.
This random growth belongs to the $2+1$ anisotropic KPZ class of
stochastic growth models. This universality class is a variant
of the KPZ universality class, which has seen many results in
recent years (see \cite{C} for a survey). By considering
suitable projections, the Markov chains also reduce to TASEP,
PushASEP (introduced in \cite{BF2}) , and the Charlier process
from \cite{KOR}.

The Gelfand--Tsetlin set parametrises the branching of
irreducible representations of the unitary group. Additionally,
the family of Markov chains can be constructed from the
representation theory of the infinite--dimensional unitary group
$U(\infty)$ \cite{BK0}. Tools from representation theory have
yielded a rich variety of two--dimensional dynamics (e.g.
\cite{kn:D2,WW}). One of the most general processes arising from
representation theory are Macdonald processes \cite{BC}.

In this paper, we hope to deepen the connections between
probability theory and representation theory. To this end, we
give two new representation--theoretic constructions for these
Markov chains. The first involves the Littlewood--Richardson
rule for decomposing tensor products of irreducible
representations of $U(n)$. The second involves a quantum random
walk on the von Neumann algebra of $U(n)$, which is then
restricted to the center. These constructions have the advantage
of not requiring infinite--dimensional representation theory and
are therefore more generalisable to other simple Lie groups.

The structure of the paper is as follows. In section \ref{MC},
we review the representation theory of $U(n)$ and introduce the
Markov chains from \cite{kn:BF}. In section \ref{TP}, we provide
a construction the combinatorial description of the
Littlewood--Richardson coefficients. In section \ref{QuantumRW},
we provide another construction, this time using a quantum
random walk on the von Neumann algebra of $U(n)$. This will also
give a representation theoretic explanation (i.e. using tensor
products of representations instead of combinatorics) for the
occurrence of the Littlewood--Richardson coefficients.

\section{Markov chains}\label{MC}

\subsection{Background} Before defining the Markov chains, let
us review some background on the unitary groups. Let $U(n)$
denote the compact group of $n\times n$ unitary matrices.
Occasionally, to clean up notation, $G$ will also refer to
$U(n)$. Let $\T^n\subset U(n)$ be the subgroup of diagonal
unitary matrices, which is a maximal torus of $U(n).$ With
respect to this maximal torus, the weight lattice of $U(n)$ is
easy to describe. The Lie algebra of $\T^n$, denoted $L\T^n$,
consists of imaginary diagonal matrices. The weight lattice
$P\subset (L\T^n)^*$ is the $n$--dimensional lattice generated
by the elements $\epsilon_1,\ldots,\epsilon_n$, where
$\epsilon_j(\text{diag}(u_1,\ldots,u_n))=u_j/(2\pi i)$. Each
$\lambda_1\epsilon_1+\ldots+\la_n\epsilon_n,\la_j\in\Z$ defines
a character of $\T^n$ by sending $(z_1,\ldots,z_n)$ to
$z_1^{\la_1}\cdots z_n^{\la_n}$. In this way, there is an
isomorphism $e:P\rightarrow\widehat{\T^n}$. For $x\in P$ and
$\bth\in\T^n,$ write $x(\theta)=e(x)(\theta).$ Note that with
this notation, $x(\bth)y(\bth)=(x+y)(\bth)$.

The roots of $U(n)$ with respect to $\T^n$ are $e_i-e_j, 1\leq
i\neq j\leq n$. The Weyl group is generated by the reflections
with respect to the roots. It is isomorphic to the group $S_n$
acting on $\{\epsilon_1,\ldots,\epsilon_n\}$, where the
reflection with respect to $e_i-e_j$ is the transposition
$(\epsilon_i \ \epsilon_j)$. The Weyl chamber is thus
$\mathcal{W}_n:=\{\lambda_1\epsilon_1+\ldots+\lambda_n\epsilon_n:\la_1\geq\ldots\geq\la_n,
\la_j\in\Z\}.$

Recall that any irreducible representation of any compact,
connected, simple Lie group is generated by a highest weight
vector, which must lie in the Weyl chamber. Conversely, any
weight in the Weyl chamber generates an irreducible
representation by successively applying negative roots. Therefore
the irreducible unitary representations of $U(n)$ is
parameterised by $\W$.

Let $m_1^{n_1}m_2^{n_2}\ldots$ denote the sequence
$(\underbrace{m_1,\ldots,m_1}_{n_1},\underbrace{m_2,\ldots,m_2}_{n_2},\ldots)$.
For $\lambda,\mu\in\GT_n$, let $\lambda\prec\mu$ denote the
condition
$\mu_1\geq\lambda_1\geq\mu_2\geq\lambda_2\geq\ldots\mu_n\geq\lambda_n$.

For each $\lambda\in\GT_n$, let $\pi_{\la}:U(n)\rightarrow
GL(V_{\la}), \chi_{\lambda}$ and $\dim\lambda$ denote the
corresponding representation, character and dimension. Let
$\widetilde{\chi}_{\la}$ denote the normalized character
$\chi_{\la}/\dim\la$. Recall that the conjugacy class of a
matrix in $U(n)$ is given by its eigenvalues. Therefore,
$\chi^{\lambda}$ is a function of
$\boldsymbol{\theta}=(\theta_1,\ldots,\theta_n)$. Explicitly,
$\chi^{\lambda}$ is just the Schur polynomial $s_{\lambda}$.
Useful formulae are \begin{equation}\label{CharacterFormula}
\chi_{\lambda}(\theta_1,\ldots,\theta_n)=s_{\lambda}(\theta_1,\ldots,\theta_n)=\frac{\det[\theta_i^{\lambda_j+n-j}]_{1\leq
i,j\leq n}}{\det[\theta_i^{n-j}]_{1\leq i,j\leq n}}.
\end{equation} and \begin{equation}\label{Giambelli}
\chi_{\lambda}(\theta_1,\ldots,\theta_n)=s_{\lambda}(\theta_1,\ldots,\theta_n)=\det[h_{\lambda_i-i+j}(\boldsymbol{\theta})],
\end{equation} where $h_k$ is the $k$--th complete homogeneous
symmetric polynomial: $$ h_k(\boldsymbol{\theta}) = \sum_{1\leq
i_1\leq \cdots \leq i_k \leq n} \theta_{i_1}\cdots\theta_{i_k}.
$$ Equation \eqref{Giambelli} is called \textit{the first
Jacobi--Trudi formula}. The elementary homogeneous symmetric
polynomial $e_k$ will also appear: $$ e_k(\boldsymbol{\theta}) =
\sum_{1 < i_1 < \cdots < i_k < n}
\theta_{i_1}\cdots\theta_{i_k}. $$ Note that \eqref{CharacterFormula} implies that
$$
s_{\lambda+\mathbf{1}}(\theta_1,\ldots,\theta_n) = \theta_1\cdots \theta_n s_{\lambda}(\theta_1,\ldots,\theta_n)
$$
where $\mathbf{1}=(1,\ldots,1)$. It is also true that $$
\frac{\det[\theta_i^{\lambda_j+n-j}]_{1\leq i,j\leq n}}{\det[\theta_i^{n-j}]_{1\leq i,j\leq n}} = \begin{cases}
h_k, \ \ \text{when} \ \ \lambda=k0^{n-1}\\ e_k, \ \ \text{when}
\ \ \lambda=1^k0^{n-k}\\
h_k(\theta_1^{-1},\ldots,\theta_n^{-1}), \ \ \text{when}\ \
\lambda=0^{n-1}(-k) \\ \theta_1^{-1}\ldots
\theta_n^{-1}e_{n-k}=e_k(\theta_1^{-1},\ldots,\theta_n^{-1}), \
\ \text{when}\ \ \lambda=0^{n-k}(-1)^k \end{cases} $$ The first two lines are well known, and the second two can be deduced from the first two. A formula for
the dimension is $$ \dim\la = \prod_{i<j}
\frac{\la_i-i-(\la_j-j)}{j-i}, $$ which extends formally to
$\dim:P\rightarrow\R$.

Let $L^2(G,dg)^G$ denote the square--integrable class functions
on $G$. By the Peter--Weyl theorem, $\{\chi_{\la}\}_{\la\in \W}$
is an orthonormal basis for $L^2(G,dg)^G$. For any $\kappa\in
L^2(G,dg)^G$ and any $\la\in\W,$ let $\widehat{\kappa}(\la)$ be
the Fourier coefficient $$ \widehat{\kappa}(\la) = \int_G
\kappa(g)\overline{\chi_{\la}}(g)dg, $$ so that
\begin{equation}\label{FT} \kappa(g) = \sum_{\la\in\W}
\widehat{\kappa}(\la)\chi_{\la}(g). \end{equation} Formally,
this means that $\sum_{\la\in\W}
\overline{\chi_{\la}(g)}\chi_{\la}(g')$ is the Dirac delta
function $\delta_{g^{-1}g'}$.

\subsection{Markov chains} Now review the Markov chains from
\cite{kn:BF}. Let $\theta_1,\ldots,\theta_n$ be fixed nonzero
complex numbers, and let $F$ be an analytic function in an
annulus $A$ which contains all the $\theta_j^{-1}$ such that
each $F(\theta_j^{-1})$ is nonzero. Given such an $F$, define
\begin{equation}\label{DefnOff} f(m):=\frac{1}{2\pi i}\oint
\frac{F(z)}{z^{m+1}}dz, \end{equation} where the integral is
taken over any positively oriented simple loop in $A$. Section
2.3 of \cite{kn:BF} defines matrices $T_n$ with rows and columns
parameterised by $\GT_n$: $$
T_n(\boldsymbol\theta;F)(\lambda,\mu):=\frac{s_{\mu}(\boldsymbol{\theta})}{s_{\lambda}(\boldsymbol{\theta})}
\frac{\det[f(\lambda_j+j-\mu_i-i)]_1^n}{\prodF} $$

\begin{proposition}\label{Multiply} There is the commuting
relation
$T_n(\boldsymbol{\theta};F_1)T_n(\boldsymbol{\theta};F_2)=T_n(\boldsymbol{\theta};F_1F_2)$.
For $\boldsymbol\theta=(1,1,\ldots,1)$,
$T_n(\boldsymbol\theta;F)$ is a stochastic matrix.
\end{proposition}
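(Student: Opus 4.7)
The plan is to handle the two assertions separately: the first by a Cauchy--Binet computation exploiting the convolution structure of Laurent coefficients, and the second by a Weyl character manipulation identifying the row sums as $1$.

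For multiplicativity, expanding the matrix product
\[
[T_n(\bth;F_1)T_n(\bth;F_2)](\lambda,\mu)=\sum_{\nu\in\GT_n} T_n(\bth;F_1)(\lambda,\nu)\,T_n(\bth;F_2)(\nu,\mu)
\]
causes $(s_\nu/s_\lambda)(s_\mu/s_\nu)$ to telescope to $s_\mu/s_\lambda$ and the denominators to combine via $\prod_i(F_1F_2)(\theta_i^{-1})=\prod_i F_1(\theta_i^{-1})\prod_i F_2(\theta_i^{-1})$. The claim therefore reduces to the determinantal identity
\[
\sum_{\nu\in\GT_n}\det[f_1(\lambda_j+j-\nu_i-i)]\det[f_2(\nu_j+j-\mu_i-i)]=\det[(f_1*f_2)(\lambda_j+j-\mu_i-i)],
\]
where $(f_1*f_2)(m)=\sum_k f_1(k)f_2(m-k)$; from \eqref{DefnOff} this convolution is exactly the $m$-th Laurent coefficient of $F_1F_2$. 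I would prove the identity by the (infinite) Cauchy--Binet theorem applied to the factorization $A_{i,k}=f_1(\lambda_i+i-k)$, $B_{k,j}=f_2(k-\mu_j-j)$, for which $(AB)_{ij}=(f_1*f_2)(\lambda_i+i-\mu_j-j)$. Cauchy--Binet writes $\det(AB)$ as a sum of $n\times n$ minor products over ordered tuples $(k_1<\cdots<k_n)\in\Z^n$; these tuples are in bijection with $\GT_n$ under a suitable coordinate shift, matching the sum over $\nu$.

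For the stochasticity claim, I would first show that the row sums equal $1$ for arbitrary $\bth$, then specialize. The key identity is
\[
\Bigl(\prod_i F(\theta_i^{-1})\Bigr) s_\lambda(\bth) = \sum_{\nu\in\GT_n} s_\nu(\bth)\,\det[f(\lambda_j+j-\nu_i-i)],
\]
obtained from the Weyl formula \eqref{CharacterFormula} by distributing one factor $F(\theta_i^{-1})$ per row of $\det[\theta_i^{\lambda_j+n-j}]$, expanding each entry $F(\theta_i^{-1})\theta_i^{\lambda_j+n-j}$ as a Laurent series in $\theta_i$, and regrouping the multilinear expansion by the partition $\nu\in\GT_n$ to which its column exponents sort. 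Tuples with repeated shifted exponents contribute zero, and the remaining signs $\sgn(\sigma)$ from the column sort reassemble into $\det[f(\cdots)]$ via the Leibniz formula. Dividing by $s_\lambda(\bth)\prod_i F(\theta_i^{-1})$ gives $\sum_\nu T_n(\bth;F)(\lambda,\nu)=1$, which persists in the limit $\bth\to(1,\ldots,1)$ where $s_\nu\to\dim\nu$.

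The principal obstacles I anticipate are (i) sign bookkeeping: both the Cauchy--Binet reindexing and the column-sort step require that the antisymmetry of the determinants absorb the permutation signs arising from matching $\GT_n$-indices to ordered tuples of shifted coordinates, and one should also verify convergence of the infinite sum; and (ii) non-negativity of the entries at $\bth=(1,\ldots,1)$, which the sum-to-one identity alone does not provide. Positivity typically needs $F$ to have non-negative Taylor coefficients, whereupon $\det[f(\cdots)]\ge 0$ by the Lindstr\"om--Gessel--Viennot lemma on non-intersecting lattice paths, and $\dim\mu/\dim\lambda>0$ from Weyl's dimension formula.
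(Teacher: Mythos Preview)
Your proposal is correct in outline, and in fact supplies much more detail than the paper does: the paper's own proof is simply a citation to Propositions~2.8 and~2.10 of Borodin--Ferrari \cite{kn:BF}, with no argument reproduced. Your Cauchy--Binet computation for multiplicativity and the Weyl--numerator expansion for the row sums are precisely the mechanisms behind those cited propositions, so you have reconstructed the original proof rather than found an alternative route.

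One remark: your caution in point~(ii) is well placed. The proposition as stated here is slightly imprecise about the hypotheses on $F$; stochasticity (as opposed to mere row-sum-$1$) genuinely requires the Laurent coefficients $f(m)$ to be nonnegative, which holds for the six families $F_{\al^\pm},F_{\be^\pm},F_{\ga^\pm}$ actually used in the paper but not for an arbitrary analytic $F$. The Lindstr\"om--Gessel--Viennot argument you propose is exactly how Borodin--Ferrari establish nonnegativity in that regime. So the gap you flag is a gap in the \emph{statement}, not in your argument.
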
 \begin{proof} Proposition 2.10 of \cite{kn:BF}
gives the commuting relation. Proposition 2.8 of \cite{kn:BF}
gives the stochastic matrix result. \end{proof}

Let us now describe the functions $F$ to be considered. Define
the functions \begin{eqnarray*} &F_{\al^+,q}(z)=(1-qz)^{-1}, \ \
&F_{\al^-,q}(z) = (1-qz^{-1})^{-1}, \ \ 1>q\geq 0\\
&F_{\be^+,p}(z)=1+pz, \ \ &F_{\be^-,p}(z)=1+pz^{-1}, \ \ 1 \geq
p\geq 0.\\ &F_{\ga^+,t}(z)=e^{tz},\ \
&F_{\ga^-,t}(z)=e^{tz^{-1}},\ \ t\geq 0. \end{eqnarray*}

\begin{lemma}\label{2.12} For these functions,

\begin{align*}
T_n(\boldsymbol{\theta};F_{\be^-,p})(\lambda,\mu)=\frac{p^k}{\prodF}\frac{s_{\mu}(\bth)}{s_{\la}(\bth)},
\ \ &\text{if each} \ \mu_j-\la_j\in\{0,1\}\ \text{and} \ \sum
(\mu_j-\la_j)=k,\\ 0, \ \ &\text{otherwise}. \end{align*}

\begin{align*}
T_n(\boldsymbol{\theta};F_{\be^+,p})(\lambda,\mu)=\frac{p^k}{\prodF}\frac{s_{\mu}(\bth)}{s_{\la}(\bth)},
\ \ &\text{if each} \ \mu_j-\la_j\in\{-1,0\}\ \text{and} \ \sum
(\mu_j-\la_j)=-k,\\ 0, \ \ &\text{otherwise}. \end{align*}

\begin{align*}
T_n(\bth;F_{\alpha^-,q})(\lambda,\mu)=\frac{q^k}{\prodF}\frac{s_{\mu}(\bth)}{s_{\la}(\bth)},
\ \ &\text{if} \ \la\prec\mu\ \text{and} \ \sum
(\mu_j-\la_j)=k,\\ 0, \ \ &\text{otherwise}. \end{align*}

\begin{align*}
T_n(\bth;F_{\alpha^+,q})(\lambda,\mu)=\frac{q^k}{\prodF}\frac{s_{\mu}(\bth)}{s_{\la}(\bth)},
\ \ &\text{if} \ \mu\prec\la\ \text{and} \ \sum
(\mu_j-\la_j)=-k,\\ 0, \ \ &\text{otherwise}. \end{align*}

\end{lemma}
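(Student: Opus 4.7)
The plan is a direct case-by-case computation: for each of the four functions $F$, first expand $F(z)$ as a Laurent series in the annulus $A$ to read off $f(m)$ from \eqref{DefnOff}, then evaluate the determinant appearing in the definition of $T_n(\bth;F)(\lambda,\mu)$.

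For $F_{\be^-,p}(z)=1+pz^{-1}$ and $F_{\be^+,p}(z)=1+pz$ the Laurent series terminates, so $f$ is supported on just two integers: $f(0)=1$ together with $f(-1)=p$ (respectively $f(1)=p$), and zero elsewhere. The matrix $[f(\la_j+j-\mu_i-i)]$ thus has entries in $\{0,1,p\}$. I would argue that the monotonicity $\la_1\geq\cdots\geq\la_n$ and $\mu_1\geq\cdots\geq\mu_n$, together with the requirement that every entry selected by a permutation $\sigma\in S_n$ lie in the support of $f$, forces $\sigma=\mathrm{id}$; a single diagonal entry vanishing already kills the determinant, so one may as well assume the diagonal support conditions $\mu_j-\la_j\in\{0,1\}$ (respectively $\{-1,0\}$). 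The identity contribution is then $\prod_j f(\la_j-\mu_j)=p^k$, which combined with the prefactor $s_\mu/(s_\la\prodF)$ gives the claimed formulas.

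For $F_{\al^\pm,q}$ the Laurent expansion is geometric, so $f$ is supported on a half-line with $f(\mp k)=q^k$ for $k\geq 0$. Many permutations can now contribute to the Leibniz expansion, so the cleanest route is to recognize the determinant as a one-variable specialization of a Jacobi--Trudi formula. Since $h_k(q)=q^k=f(-k)$ in the $\al^-$ case, the determinant matches, up to the sign conventions built into the indices, the identity $s_{\mu/\la}(q)=\det[h_{\mu_i-\la_j-i+j}(q)]$. The classical fact that the single-variable specialization $s_{\mu/\la}(q)$ equals $q^{|\mu/\la|}$ when $\mu/\la$ is a horizontal strip and vanishes otherwise, combined with the observation that the horizontal-strip condition is precisely $\la\prec\mu$, yields the $\al^-$ formula. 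The $\al^+$ statement then follows by the involution $z\leftrightarrow z^{-1}$, which exchanges the roles of $\la$ and $\mu$ in the interlacing.

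The main obstacle lies in the $\al$ cases, since the Leibniz expansion has many nonzero terms whose cancellations are not transparent from the sparsity pattern alone. Routing through Jacobi--Trudi handles those cancellations uniformly, although one must carefully reconcile the index conventions in $\det[f(\la_j+j-\mu_i-i)]$ with the standard Jacobi--Trudi form via a transposition and relabeling; as an alternative, one could invoke the Lindstr\"om--Gessel--Viennot lemma, viewing $f(m)$ as the weight of a single step in a lattice-path model and the determinant as the signed count of non-intersecting path systems.
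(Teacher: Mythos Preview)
The paper does not actually prove this lemma: its entire proof reads ``These are Lemmas 2.12 and 2.13 from \cite{kn:BF}.'' So you are supplying a genuine argument where the paper simply defers to the original source.

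Your strategy is correct and is essentially how one would reconstruct the Borodin--Ferrari computation. For the $\beta^\pm$ cases the Laurent series has two terms, $f$ is supported on $\{0,\pm 1\}$, and the matrix $[f(\lambda_j+j-\mu_i-i)]$ is so sparse that only the identity permutation survives in the Leibniz expansion; the determinant is then the product of diagonal entries, which is $p^k$ under the stated support condition and zero otherwise. One small point: the sentence ``a single diagonal entry vanishing already kills the determinant'' is only justified \emph{after} you have established that the identity is the sole contributing permutation, so the order of those two clauses should be reversed. The underlying reason only the identity contributes is that the strictly monotone sequences $\lambda_j-j$ and $\mu_i-i$ (or equivalently $\lambda_j+n-j$, $\mu_i+n-i$) cannot be matched by a nontrivial permutation while keeping all differences in $\{0,\pm 1\}$; this is the step that deserves a one-line pigeonhole argument rather than an assertion.

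For the $\alpha^\pm$ cases your route through the single-variable Jacobi--Trudi identity is the right one and handles the cancellations cleanly: with $f(-k)=q^k\,1_{k\ge 0}$ the determinant is exactly $\det[h_{\mu_i-\lambda_j-i+j}(q)]$ up to a transpose, hence $s_{\mu/\lambda}(q)$, which is $q^{|\mu|-|\lambda|}$ for a horizontal strip and zero otherwise. You already flag the index-matching step; it is purely bookkeeping but worth writing out, since the paper's convention $f(\lambda_j+j-\mu_i-i)$ differs from the textbook Jacobi--Trudi indexing by a transposition. The Lindstr\"om--Gessel--Viennot alternative you mention would also work and is in the same spirit.
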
 \begin{proof} These are Lemmas 2.12 and 2.13 from
\cite{kn:BF}. \end{proof}

Use the variable $\xi$ to denote one of the symbols
$\al^{\pm},\be^{\pm},\ga^{\pm}$. For $\xi=\be^{\pm}$ and
$\xi=\ga^{\pm}$, the process with transition probabilities
$T_n(\boldsymbol{1},F_{\xi})$ are respectively the
\textit{Krawtchouk} and \textit{Charlier} processes from
\cite{KOR}. These can be described respectively as the Doob
$h$--transform (where $h(\la)=\dim\la$) of $n$ independent
Bernoulli walks and $n$ independent exponential random walks of
rate $1.$

There is a general construction for building multivariate Markov
chains out of $\{T_n:n=1,2,3,\ldots\}$. This construction
requires a intertwining relation between the transition
probabilities (see section 2 of \cite{kn:BF}). It is still an
open problem to find a representation--theoretic interpretation
of the commutation relation.

Let $\mathbb{P}_n(\bth;F)(\mu)=T_n(\bth;F)(\boldsymbol{0},\mu)$.
From Lemma \ref{2.12}, we see that for
$F=F_{\al^{\pm}},F_{\be^{\pm}}$, these are geometric random
variables weighted by the dimension of the representation. The
construction then proceeds as follows. First, for
$F=F_{\al^{\pm}}$ or $F_{\be^{\pm}}$, construct $T_n(F)$ out of
$\mathbb{P}_n(F)$ (Lemmas \ref{FirstStepTensor} and
\ref{FirstStepQuantum} below). Second, we show that there is a
commuting relation that is analogous to the one in Proposition
\ref{Multiply} (Proposition \ref{SecondStepTensor} and Lemma
\ref{SecondStepQuantum} below). Finally, use a continuity
argument (Lemmas \ref{ThirdStepTensor} and
\ref{ThirdStepQuantum} below) to give $F_{\ga^{\pm}}$.

\section{Combinatorial Formula}\label{TP} 
From Lemma 3.4 and (2) of \cite{BK0}, the measures $\mathbb{P}_n(\boldsymbol{\theta};F)(\mu)$ have a nice algebraic interpretation from the formula
$$
\sum_{\mu \in \mathcal{W}_n} \mathbb{P}_n(\boldsymbol{\theta};F)(\mu) \frac{s_{\mu}(\boldsymbol{\theta}) }{ s_{\mu}(\boldsymbol{1})} = F(\theta_1) \cdots F(\theta_n).$$
In words, the measures $\mathbb{P}_n(\boldsymbol{\theta};F)$ are the coefficients in the decomposition of the function $F(\theta_1)\cdots F(\theta_n)$ over the normalized irreducible characters of $U(n)$. However, these measures are obtained by applying $T_n(\boldsymbol{\theta};F)$ to the initial condition $\boldsymbol{0}$, and it is not clear what algebraic structure remains if started from a different initial condition. Theorem \ref{Theorem} below will provide a generalization to the case when the initial condition is some nonzero $\lambda \in \mathcal{W}_n$. It also worth noting that for $F=F_{\alpha^+}$, a similar theorem has appeared in the case of the orthogonal groups, using Pieri's formula (see \cite{kn:D},\cite{kn:K}). The theorem has also found use in \cite{K2}.

Before proceeding to the statement of Theorem \ref{Theorem}, let us briefly recall the Littlewood--Richardson coefficients. For any
two paritions $\lambda,\tau$ such that $\lambda_j\leq\tau_j$ for
each $j$, the \textit{skew diagram} of $\tau\backslash\lambda$
is the set--theoretic difference of the Young diagrams of
$\lambda$ and $\tau$. A \textit{skew Tableau} of shape
$\tau\backslash\lambda$ and weight $\mu$ is obtained by filling
in the skew diagram of $\tau\backslash\lambda$ with positive
integers such that the integer $k$ appears $\mu_k$ times. A skew
Tableau is \textit{semistandard} if it the entries weakly
increase along each row and strictly increase down each column.
A \textit{Littlewood--Richardson tableau} is a semistandard skew
Tableau with the additional property that in the sequence
obtained by concatenating the reversed rows, every initial part
of the sequence contains any number $k$ at least as often as it
contains $k+1$. See figure \ref{Figure} for two examples. The Littlewood--Richardson coefficient $c_{\lambda\mu}^{\tau}$ counts the number of Littlewood--Richardson tableaux of shape $\tau\backslash\lambda$ and of weight $\mu$. In the example in figure \ref{Figure}, $c^{\tau}_{\lambda\mu}$ equals $2$.

In the special case $\mu=k0^{n-1}$, the Littlewood--Richardson
rule is known as \textit{Pieri's formula}. In this case, the
semistandard skew Tableau can only be filled with $1$'s, so the
condition on the concatenated sequence is automatically
satisfied. The only requirement is that the skew diagram of
$\tau\backslash\lambda$ does not contain two boxes in the same
column. In other words, \begin{equation} \text{ when }
\mu=k0^{n-1}, \ \ \LR = \begin{cases} 1, \ \ &\text{if}\
\la\prec\tau \text{ and } \sum (\tau_j-\la_j)=k,\\ 0, \ \
&\text{else}. \end{cases} \label{Pieri1} \end{equation}

We also need the special case $\mu=1^k0^{n-k}$. The integers
appearing in the semistandard skew Tableau are
$\{1,2,\ldots,k\}$, so the condition on the concatenated
sequence can only hold if the skew diagram of
$\tau\backslash\lambda$ does not contain two boxes in the same
\textit{row}. In other words, \begin{equation} \text{ when }
\mu=1^k0^{n-k}, \ \ \LR = \begin{cases} 1, \ \ & \text{if} \
\tau_j-\la_j\in\{0,1\}\ \text{and} \ \sum (\tau_j-\la_j)=k,\\ 0,
\ \ & \text{else}. \end{cases} \label{Pieri2} \end{equation}

\begin{figure} \caption{For $\tau=(4,3,2), \lambda=(2,1,0)$ and
$\mu=(3,2,1)$, here is a Littlewood--Richardson tableau of shape
$\tau\backslash\lambda$ and weight $\mu$. The sequence obtained
by concatenating the reversed rows is $112132$ in the first case, and $112231$ in the second.} \begin{center}
\includegraphics[height=0.7in]{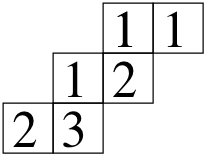} \quad \includegraphics[height=0.7in]{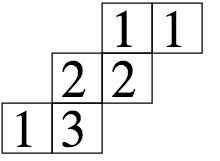}
\end{center}
\label{Figure} \end{figure}

The Littlewood-Richardson coefficients are related to
representation theory by the decomposition $$ V_{\lambda}\otimes
V_{\mu} = \bigoplus_{\tau\in GT_n} \LR V_{\tau}. $$ Since the
character of $V_{\lambda}$ is the Schur polynomial $s_{\la}$ it
is equivalent to say $$ s_{\la}s_{\mu}=\sum_{\tau\in\GT_n} \LR
s_{\tau}. $$ 

Note that this implies the identity $c^{\tau}_{\lambda\mu}=c^{\tau}_{\mu\lambda}$, which would not have been obvious from the combinatorial description. Also note that this allows for a natural definition of $c^{\tau}_{\mu\lambda}$ when one of $\tau,\mu,\lambda$ has negative entries. Namely, letting $\mathbf{1}=(1,1,\ldots,1)$, since $s_{\lambda+\mathbf{1}}(x_1,\ldots,x_n)= x_1\cdots x_n s_{\lambda}(x_1,\ldots,x_n)$, we can define 
$$
c^{\tau-\mathbf{1}}_{\lambda-\mathbf{1},\mu}=c^{\tau}_{\lambda\mu}.
$$
Also define the coefficients
$c_{\lambda\sigma\mu}^{\tau}$ by $$
s_{\la}s_{\si}s_{\nu}=\sum_{\tau\in\GT_n}c_{\lambda\sigma\nu}^{\tau}s_{\tau}.
$$ It follows immediately that $$ \sum_{\mu\in\GT_n} \LR
c_{\si\nu}^{\mu}=c_{\la\si\nu}^{\tau}. $$

The theorem can now be stated.

\begin{theorem}\label{Theorem} For $F=F_{\xi}$,
\begin{equation}\label{Star} \sum_{\mu\in\GT_n}
\mathbb{P}_n(\bth,F)(\mu)\LR\frac{s_{\tau}(\bth)}{s_{\la}(\bth)s_{\mu}(\bth)}
= T_n(\bth,F)(\lambda,\tau). \end{equation} \end{theorem}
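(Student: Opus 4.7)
The plan is to handle the six cases $\xi \in \{\alpha^\pm, \beta^\pm, \gamma^\pm\}$ in two groups. For the four Pieri-type functions $F_{\alpha^\pm,q}$ and $F_{\beta^\pm,p}$, I verify~\eqref{Star} directly by combining the explicit transition formulas in Lemma~\ref{2.12} with the Pieri rules~\eqref{Pieri1} and~\eqref{Pieri2}. For $F_{\gamma^\pm,t}$, I use a continuity argument combined with the product identity in Proposition~\ref{Multiply}.

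For the four Pieri-type $F_\xi$, Lemma~\ref{2.12} applied at $\lambda = \boldsymbol 0$ shows that $\mathbb{P}_n(\bth, F_\xi)(\mu)$ is supported on a single one-parameter family of signatures: $\mu = k 0^{n-1}$ for $\alpha^-$, $\mu = 0^{n-1}(-k)$ for $\alpha^+$, $\mu = 1^k 0^{n-k}$ for $\beta^-$, and $\mu = 0^{n-k}(-1)^k$ for $\beta^+$. On each such support, $\LR$ reduces to a Pieri indicator: for $\alpha^-$ and $\beta^-$ this is precisely~\eqref{Pieri1} and~\eqref{Pieri2}, while for $\alpha^+$ and $\beta^+$ the signatures correspond to the dual representations $\mathrm{Sym}^k V^*$ and $\Lambda^k V^*$, so the dual Pieri rule (equivalently, obtained from~\eqref{Pieri1},~\eqref{Pieri2} via the tensor-shift identity $c_{\lambda+a\mathbf 1,\nu+b\mathbf 1}^{\tau+(a+b)\mathbf 1} = c_{\lambda\nu}^\tau$ coming from tensoring with powers of $\det$) yields the analogous indicator. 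Substituting the value of $\mathbb{P}_n(\bth, F_\xi)(\mu)$ from Lemma~\ref{2.12}, the $s_\mu(\bth)$ factors cancel and the sum in the left-hand side of~\eqref{Star} collapses to the single term determined by $\lambda,\tau$, which is exactly $T_n(\bth, F_\xi)(\lambda,\tau)$ from Lemma~\ref{2.12}.

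For the $\gamma^\pm$ cases, I first establish that~\eqref{Star} is multiplicative in $F$: if it holds for $F_1$ and $F_2$, then it holds for $F_1 F_2$. Starting from $T_n(\bth;F_1F_2)(\lambda,\tau) = \sum_\sigma T_n(\bth;F_1)(\lambda,\sigma)T_n(\bth;F_2)(\sigma,\tau)$ (Proposition~\ref{Multiply}) and applying~\eqref{Star} to each factor, the sum over $\sigma$ telescopes via
\begin{equation*}
\sum_\sigma c_{\lambda\mu_1}^{\sigma} c_{\sigma\mu_2}^{\tau} = c_{\lambda\mu_1\mu_2}^{\tau} = \sum_{\mu'} c_{\mu_1\mu_2}^{\mu'} c_{\lambda\mu'}^{\tau},
\end{equation*}
and the inner sum over $(\mu_1,\mu_2)$ at fixed $\mu'$ is then recognized as $\mathbb{P}_n(\bth;F_1F_2)(\mu')$ by the same computation at $\lambda=\boldsymbol 0$. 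Iterating, the already-proved~\eqref{Star} for $F_{\beta^+,t/N}$ lifts to $F_{\beta^+,t/N}^N=(1+tz/N)^N$, which converges uniformly on any annulus to $e^{tz}=F_{\gamma^+,t}$ as $N\to\infty$. Both sides of~\eqref{Star} depend continuously on $F$ (the right side through the determinant in $T_n$, the left side through a locally finite sum since $c_{\lambda\mu}^{\tau}$ vanishes outside a bounded set of $\mu$ for fixed $\lambda,\tau$), so the identity passes to the limit. The case $\gamma^-$ is completely analogous, using $F_{\beta^-,t/N}^N \to F_{\gamma^-,t}$.

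The main obstacle is the multiplicativity step in the third paragraph: the rearrangement of the triple sum over $\sigma,\mu_1,\mu_2$, while tracking the $s_\mu(\bth)$ prefactors and applying the Littlewood--Richardson associativity in both directions, requires careful bookkeeping to see that one is left with exactly the right-hand side of~\eqref{Star} for $F_1F_2$. Once this algebraic identity is in hand, the continuity argument for $\gamma^\pm$ reduces to termwise convergence of a finite sum, which is routine.
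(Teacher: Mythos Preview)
Your proof is correct and follows the same three-step architecture as the paper: verify \eqref{Star} directly for $F_{\alpha^\pm}, F_{\beta^\pm}$ via Pieri, show closure under products, then pass to the limit for $F_{\gamma^\pm}$. The first and third steps match the paper's Lemma~\ref{FirstStepTensor} and Lemma~\ref{ThirdStepTensor} essentially verbatim. The genuine difference is in the multiplicativity step. The paper (Proposition~\ref{SecondStepTensor}) packages it as a pushforward statement for complex measures along a commutative diagram $\GT_n^3 \to \GT_n \to \{0\}$, introducing a ``random map'' whose law is the ratio $c_{\sigma\nu}^\mu c_{\lambda\mu}^\tau / c_{\lambda\sigma\nu}^\tau$; your argument is a direct algebraic unwinding---expand $T_n(F_1F_2)$ via Proposition~\ref{Multiply}, insert \eqref{Star} for each factor, apply the two-sided associativity $\sum_\sigma c_{\lambda\mu_1}^\sigma c_{\sigma\mu_2}^\tau = \sum_{\mu'} c_{\mu_1\mu_2}^{\mu'} c_{\lambda\mu'}^\tau$, and recognise the inner $(\mu_1,\mu_2)$-sum as $\mathbb P_n(F_1F_2)(\mu')$ by the $\lambda=\boldsymbol 0$ instance of the same calculation. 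Your route is shorter and more transparent; the paper's packaging makes the probabilistic picture (random decomposition of a tensor product) more explicit. One minor point: for $\xi=\alpha^+$ the tensor-shift identity alone does not reduce $c_{\lambda,\,0^{n-1}(-k)}^{\tau}$ to a Pieri case, since no shift of $0^{n-1}(-k)$ is a row or column; you need in addition either the adjunction $c_{\lambda\mu}^{\tau}=c_{\tau,\mu^*}^{\lambda}$ or the conjugation symmetry $c_{\lambda\mu}^{\tau}=c_{-w_0\lambda,\,-w_0\mu}^{-w_0\tau}$ (the paper handles this case via the explicit formula for $s_\lambda(\bth^{-1})$).
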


\begin{proof} Let $\mathcal{C}$ be the set of all functions
$F:A\rightarrow\C$ such that \eqref{Star} holds.
\begin{lemma}\label{FirstStepTensor} The functions
$F=F_{\al^{\pm}},F_{\be^{\pm}}$ are in $\mathcal{C}$.
\end{lemma} \begin{proof} Start with $1+pz^{-1}$. By lemma
\ref{2.12}, \begin{equation}\label{P1}
\mathbb{P}_n(\bal;F)(\mu)= \begin{cases}
s_{\mu}(\bal)\frac{p^k}{\prodF}, \ \ & \mu=1^k0^{n-k},\\ 0, \ \
&\text{else}. \end{cases} \end{equation} Thus it suffices to
consider the value of $\LR$ when $\mu=1^k0^{n-k}$. By
using Pieri's formula \eqref{Pieri2} and another application of
lemma \ref{2.12}, $F_{\be^+}\in\mathcal{C}$.

Now let consider $1+pz$. Since $\tilde{f}(m)=f(-m)$, \[
\mathbb{P}_n(\bal;F_{\be^-})(\mu)= \begin{cases}
s_{\mu}(\bal)\frac{p^k}{\prodF}, \ \ & \mu=0^{n-k}(-1)^k,\\ 0, \
\ &\text{else}. \end{cases} \] Since
$\LR=c^{\tau+\mathbf{1}}_{\mu+\mathbf{1},\la}$, then for
$\mu=0^{n-k}(-1)^k$, \[ \LR = \begin{cases} 1, \ \ & \text{if
each} \ \tau_i-\la_i\in\{-1,0\}\ \text{and} \ \sum
(\tau_j-\la_j)=-k,\\ 0, \ \ & \text{else}. \end{cases} \]
Therefore, by lemma \ref{2.12}, $F_{\be^-}\in\mathcal{C}$.

Now consider the function $F(z)=(1-qz^{-1})^{-1}$ By lemma
\ref{2.12}, \[ \mathbb{P}_n(\bal;F)(\mu)= \begin{cases}
s_{\mu}(\bal)\frac{q^k}{\prod F(\alpha_j^{-1})}, \ \ &
\mu=k0^{n-1},\\ 0, \ \ &\text{else}. \end{cases} \] By
\eqref{Pieri1} and lemma \ref{2.12}, $F_{\al^+}\in\mathcal{C}$.

Finally let $\tilde{F}(z)=(1-qz)^{-1}$. Then \[
\mathbb{P}_n(\bal;F)(\mu)= \begin{cases}
s_{\mu}(\bal)\frac{q^k}{\prodF}, \ \ & \mu=0^{n-1}(-k),\\ 0, \ \
&\text{else}. \end{cases} \] Using the identity $$
s_{\lambda}(\bal^{-1})=(\theta_1\ldots\theta_n)^{-\lambda_1}s_{(\lambda_1-\lambda_n,\ldots,\la_1-\lambda_2,0)}(\bal),
$$ we get for $\mu=0^{n-1}(-k)$ \begin{eqnarray*}
s_{(\lambda_1-\lambda_n,\ldots,\lambda_{1}-\lambda_2,0)}(\bal)s_{k0^{n-1}}(\bal)&=&
(\theta_1\ldots\theta_n)^{\lambda_1}s_{\la}(\bth^{-1})\cdot
s_{\mu}(\bth^{-1})\\ &=& (\theta_1\ldots\theta_n)^{\lambda_1}
\sum_{\tau} \LR s_{\tau}(\bth^{-1})\\ &=&\sum_{\tau}\LR
(\theta_1\ldots\theta_n)^{\la_1-\tau_1}
s_{(\tau_1-\tau_n,\ldots,\tau_{1}-\tau_2,0)}(\bth) \\
&=&\sum_{\tau}\LR s_{(\la_1-\tau_n,\ldots,\la_1-\tau_1)}(\bth),
\end{eqnarray*} so \[ \LR = \begin{cases} 1, \ \ & \text{if} \
(\lambda_1-\lambda_n,\ldots,\la_1-\lambda_2,0) \prec
(\la_1-\tau_n,\ldots,\la_{1}-\tau_1) \ \text{and} \ \sum
(-\tau_j+\la_j)=k,\\ 0, \ \ & \text{else}. \end{cases} \]
Equivalently, $$ \LR= \begin{cases} 1, \ \ & \text{if}\
\tau\prec\lambda \text{ and } \sum (\tau_j-\la_j)=-k\\ 0,\ \ &
\text{else}. \end{cases} $$ Therefore, by lemma \ref{2.12},
$F_{\al^-}\in\mathcal{C}$.

\end{proof}

\begin{proposition}\label{SecondStepTensor} If
$F_1,F_2\in\mathcal{C}$, then $F_1F_2\in\mathcal{C}$.
\end{proposition} 
\begin{proof} 
Fix $\lambda$ and $\tau$. We want to prove that
\begin{equation*} \sum_{\mu\in\GT_n}
\mathbb{P}_n(\bth,F_1F_2)(\mu)\LR\frac{s_{\tau}(\bth)}{s_{\la}(\bth)s_{\mu}(\bth)}
= T_n(\bth,F_1F_2)(\lambda,\tau). \end{equation*} 
Consider the expression
$$
P(\sigma,\gamma,\nu) =
\mathbb{P}_n(\bal;F_1)(\si)c_{\la\si}^{\ga}\frac{s_{\ga}(\bal)}{s_{\la}(\bal)s_{\si}(\bal)}\mathbb{P}_n(\bal;F_2)(\nu)c^{\tau}_{\ga\nu}\frac{s_{\tau}(\bal)}{s_{\ga}(\bal)s_{\nu}(\bal)}.
$$ 
Since $F_1,F_2\in \mathcal{C}$, we have that
\begin{align*}
\sum_{\sigma,\gamma,\nu \in \mathcal{W}_n} P(\sigma,\gamma,\nu) &= \sum_{\gamma \in \mathcal{W}_n} T_n(\bal,F_1)(\lambda,\gamma)T_n(\bal,F_2)(\gamma,\tau)\\
&= T_n(\bal,F_1F_2)(\lambda,\tau)
\end{align*}
Now we also claim that 
\begin{equation}\label{done}
\mathbb{P}_n(\bal;F_1F_2)(\mu)\LR\frac{s_{\tau}(\bal)}{s_{\la}(\bal)s_{\mu}(\bal)} = \sum_{\sigma,\gamma,\nu \in \mathcal{W}_n} \dfrac{c_{\si\nu}^{\mu}\LR}{c_{\la\si\nu}^{\tau}}P(\sigma,\gamma,\nu) .
\end{equation}
Summing over $\ga$ on the right--hand--side yields
$$
\sum_{\sigma,\nu \in \mathcal{W}_n} \dfrac{c_{\si\nu}^{\mu}\LR}{c_{\la\si\nu}^{\tau}}\mathbb{P}_n(\bal;F_1)(\si)c_{\la\si\nu}^{\tau}\frac{1}{s_{\la}(\bal)s_{\si}(\bal)}\mathbb{P}_n(\bal;F_2)(\nu)\frac{s_{\tau}(\bal)}{s_{\nu}(\bal)},
$$
while the left--hand--side equals (since $F_2\in \mathcal{C}$)
\begin{align*}
&\mathbb{P}_n(\bal;F_1)(\si)T_n(\bal;F_2)(\si,\mu) \LR\frac{s_{\tau}(\bal)}{s_{\la}(\bal)s_{\mu}(\bal)} \\
&=\sum_{\si\in\GT_n} \mathbb{P}_n(\bal;F_1)(\si)
\sum_{\nu\in\GT_n}
\mathbb{P}_n(\bal;F_2)(\nu)c_{\si\nu}^{\mu}\LR\frac{s_{\mu}(\bal)}{s_{\si}(\bal)s_{\nu}(\bal)}\frac{s_{\tau}(\bal)}{s_{\la}(\bal)s_{\mu}(\bal)} ,
\end{align*} 
which proves \eqref{done}. And then summing both sides of \eqref{done} over $\mu\in \mathcal{W}_n$ yields the result.
\end{proof}

\begin{lemma}\label{ThirdStepTensor} If $\{F_k\}$ is a sequence
of functions in $\mathcal{C}$ which converges to $F$ uniformly
in $A$, then $F\in\mathcal{C}$. \end{lemma} \begin{proof} It is
immediate that $\{f_k\}$ converges to $f$ uniformly. Since the
determinant is a continuous function of its entries,
$T_n(\bal;F_k)(\la,\tau)$ converges to $T_n(\bal;F)(\la,\tau)$.
Since sum in the left--hand side of \eqref{Star} only has
finitely many terms, convergence must hold as well. \end{proof}

Finally, since $e^{x}=\lim_{k\rightarrow\infty} (1+ x/k)^k =
\lim_{k\rightarrow\infty} (1-x/k)^{-k}$, Lemma
\ref{FirstStepTensor}, Proposition \ref{SecondStepTensor} and
Lemma \ref{ThirdStepTensor} prove Theorem \ref{Theorem}.
\end{proof}

\section{Quantum Random Walk}\label{QuantumRW}

Let us introduce some notation, which will follow \cite{kn:B}
closely.

Let $G$ be a compact topological group, let $dg$ denote its Haar
measure (normalized to have total weight $1$), and let
$H=L^2(G,dg)$ be the Hilbert space of square--integrable
functions. Let $\alpha$ denote the representation of $G$ on $H$
by left translation. In other words, for $f\in B(H)$ a unitary
operator on $H$, the map $\alpha:G\rightarrow B(H)$ is defined
by $[\al(g)(f)](x)=f(xg)$. The von Neumann algebra of $G$,
denoted $vN(G)$, is the closure (under the strong operator
topology) of the $*$--subalgebra of $B(H)$ generated by
$\alpha(G)$.

Let $\kappa$ be a continuous, positive type function on $G$
which sends the identity to $1$. This defines a state $\varphi$
on $vN(G)$ by $\varphi(\alpha(g))=\kappa(g)$, and also defines a
completely positive map $Q_n(\kappa)$ on $vN(G)$ by
$[Q_n(\kappa)](\alpha(g))=\kappa(g)\alpha(g)$.

Since $vN(G)$ is a unital $C^*$--algebra, we can define the
infinite tensor product $vN(G)^{\otimes \infty}$, which is also
a $C^*$--algebra. Let $\varphi^{\otimes\infty}$ be the state on
$vN(G)^{\otimes\infty}$ defined by
$\varphi^{\otimes\infty}(x_1\otimes
x_2\otimes\cdots)=\varphi(x_1)\varphi(x_2)\ldots$. The
Gelfand--Naimark--Segal construction produces a von Neumann
algebra $\mathcal{A}$. For nonnegative integers $k$, define
$j_k: vN(G)\rightarrow \mathcal{A}$ by $j_0(\alpha(g))=Id$.
and $j_k(\alpha(g))=\alpha(g)^{\otimes k}\otimes Id\otimes Id
\otimes \cdots$. The $j_k$ form what is called a
``non-commutative Markov process''. There is a projection map
$E_k$ from $\mathcal{A}$ to $\mathcal{A}_k$, the von Neumann
subalgebra generated by the images of $j_0,\ldots,j_k$. For
$k\leq m$, there is the Markov property $E_k\circ j_m=j_k\circ
Q_n^{m-k}$. One could think of these objects with the following
analogy: \begin{center} \begin{tabular}{ | l | l | l | l | l | l
| l |} \hline Classical & State Space $S$ &
$(\Omega,\mathcal{F})$ & $(\Omega,\mathcal{F}_k)$ &
$X_k:\Omega\rightarrow S$ &
$\mathbb{E}(\cdot\vert\mathcal{F}_k)$ & $\mathbb{E} $ \\\hline
Quantum & $vN(G)$ & $\mathcal{A}$ & $\mathcal{A}_k$ & $j_k$ &
$E_k(\cdot)$ & $\varphi^{\otimes\infty} $ \\ \hline
\end{tabular} \end{center}

\subsection{Restriction to Center} Let $Z(vN(G))$ be the center
of $vN(G)$. The Peter--Weyl theorem gives an isomorphism
$\chi:Z(vN(G))\rightarrow L^{\infty}(\widehat{G})$, where
$\widehat{G}$ is the set of equivalence classes of irreducible
representations of $G$. If $\kappa$ is constant on conjugacy
classes, then $Q_n$ sends $Z(vN(G))$ to itself. Because it is
completely positive, the map $\chi\circ Q_n \circ \chi^{-1}$
defines a transition matrix for a (classical) Markov chain with
state space $\widehat{G}$, assuming that $\kappa$ is normalized so that $\kappa(\mathrm{Id})=1$. By a slight abuse of notation, let
$Q_n(\kappa)(x,y)$ denote the transition probabilities.

Now let $G=U(n)$. Define $\kappa_{F}:U(n)\rightarrow\C$ to be
the class function defined by $$ \kappa_{F}(\bth)=\prod_{j=1}^n
\frac{F(\theta_j)}{F(1)}. $$ Here,
$\bth=(\theta_1,\ldots,\theta_n)$ are the eigenvalues of the
unitary matrix on which $\kappa_F$ is applied. If $F=F_{\xi}$,
write $\kappa_{\xi}=\kappa_{F_{\xi}}$.

Here is some useful information about $Q_n(\kappa)$. Below, $\text{Mat}(\W \times \W)$ is the $^*$--algebra of matrices with rows and columns indexed by the Weyl chamber $\W$, and any $M \in \text{Mat}(\W \times \W)$ has $(\lambda,\mu)$--entry denoted by $M(\lambda,\mu)$. Additionally, $BC(G,\C)$ denotes the complex--valued continuous class functions on $G$.

\begin{proposition}\label{QProp} 1. For any $\kappa\in
L^2(G,dg)^G,$ \begin{equation}\label{Qn1}
Q_n(\kappa)(\lambda,\mu) =
\frac{\dim{\mu}}{\dim{\lambda}}\int_{U(n)}
\chi_{\lambda}(g)\overline{\chi_{\mu}(g)}\kappa(g)dg.
\end{equation}

2. The map $Q_n:BC(G,\C)^G\rightarrow\text{Mat}(\W\times\W)$ is a morphism
of $^*$--algebras.

\end{proposition}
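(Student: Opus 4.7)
The plan is to prove part (1) by expressing the restricted map $Q|_{Z(vN(G))}$ explicitly in terms of the minimal central projections of $vN(G)$, and then to use the resulting integral formula to verify the $*$-algebra morphism properties in part (2) by direct computation.

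For part (1), Peter--Weyl furnishes minimal central projections $p_\mu := \dim\mu \int_G \overline{\chi_\mu(g)}\,\alpha(g)\,dg \in Z(vN(G))$, and under the isomorphism $\chi$ these correspond to the indicator functions of the points $\mu \in \widehat{G}$. Consequently $Q_n(\kappa)(\lambda,\mu)$ is the coefficient of $p_\lambda$ in the expansion $Q(p_\mu) = \sum_\lambda Q_n(\kappa)(\lambda,\mu)\, p_\lambda$. Applying $Q$, interchanging it with the integral (using its ultraweak continuity), and invoking $Q(\alpha(g)) = \kappa(g)\alpha(g)$ gives $Q(p_\mu) = \dim\mu \int_G \overline{\chi_\mu(g)}\kappa(g)\alpha(g)\,dg$. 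Since $\overline{\chi_\mu}\kappa$ is a class function, expanding it against the orthonormal basis $\{\overline{\chi_\lambda}\}$ of $L^2(G,dg)^G$ and reassembling the resulting integrals into the projections $p_\lambda$ introduces the factor $\dim\mu/\dim\lambda$ and yields \eqref{Qn1}.

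For part (2), linearity of $Q_n$ is immediate from the linearity of the integral in \eqref{Qn1}. Multiplicativity reduces to the computation
\[
\bigl(Q_n(\kappa_1)Q_n(\kappa_2)\bigr)(\lambda,\mu) = \sum_\nu \frac{\dim\nu}{\dim\lambda}\int \chi_\lambda(g)\overline{\chi_\nu(g)}\kappa_1(g)\,dg \cdot \frac{\dim\mu}{\dim\nu}\int \chi_\nu(h)\overline{\chi_\mu(h)}\kappa_2(h)\,dh,
\]
in which the factors $\dim\nu$ cancel and, after interchanging the $\nu$-sum with the double integral, the completeness relation $\sum_\nu \overline{\chi_\nu(g)}\chi_\nu(h) = \delta_{g^{-1}h}$ (noted after \eqref{FT}) collapses everything to $\frac{\dim\mu}{\dim\lambda}\int \chi_\lambda\overline{\chi_\mu}\kappa_1\kappa_2\,dg = Q_n(\kappa_1\kappa_2)(\lambda,\mu)$. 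The $*$-preservation is verified by computing $\overline{Q_n(\kappa)(\mu,\lambda)}$ and absorbing the swapped dimension factors into the $*$-structure on $\text{Mat}(\W\times\W)$ (namely, the adjoint with respect to the Plancherel inner product weighting each $\lambda$ by $(\dim\lambda)^2$), which then produces $Q_n(\overline{\kappa})(\lambda,\mu)$.

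The main obstacle is rigorously justifying the use of the completeness relation $\sum_\nu \overline{\chi_\nu(g)}\chi_\nu(h) = \delta_{g^{-1}h}$ inside the double integral, since this identity holds only as an $L^2$-completeness statement and not pointwise. One can avoid distributional arguments by instead interpreting each of the inner integrals as a Fourier coefficient of the class function $\chi_\lambda\kappa_1$ (respectively $\overline{\chi_\mu}\kappa_2$) and invoking Parseval's identity in $L^2(G,dg)^G$ to identify the sum over $\nu$ with the integral of the product. This simultaneously ensures that the sum over $\nu$ defining the matrix product $Q_n(\kappa_1)Q_n(\kappa_2)$ converges.
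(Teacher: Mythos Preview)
Your argument is correct. For part~(1) the paper simply cites Theorem~3.2 of \cite{kn:Bi2}, whereas you give a self-contained derivation via the minimal central projections $p_\mu$; your computation $Q(p_\mu)=\sum_\lambda \bigl(\tfrac{\dim\mu}{\dim\lambda}\int\chi_\lambda\overline{\chi_\mu}\kappa\bigr)p_\lambda$ is exactly what underlies that cited result, so this is a genuine gain in self-containment. For the multiplicativity in part~(2) your Parseval argument is precisely what the paper means by ``applying \eqref{FT} to \eqref{Qn1}''; the paper additionally records a second, structural proof via $Q_1\circ Q_2=Q_{12}$ on $vN(G)$, which you do not use but which requires no analytic justification at all. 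Finally, your treatment of the $*$-preservation is more honest than the paper's: the paper asserts it is ``immediate'' from \eqref{Qn1}, but under the naive conjugate-transpose one picks up a stray factor $(\dim\lambda/\dim\mu)^2$, and your observation that the correct $*$ on $\mathrm{Mat}(\W\times\W)$ is the adjoint for the Plancherel weighting $(\dim\lambda)^2$ is exactly the missing remark.
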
 \begin{proof} 1. This is Theorem 3.2 from
\cite{kn:Bi2}. Although the result there is only stated for
certain $\kappa$, by following the proof one sees that it holds
more generally.

2. The fact that $Q_n$ preserves linearity and $^*$ follows
immediately from \eqref{Qn1}. By applying \eqref{FT} to
\eqref{Qn1}, it is immediate that multiplication is also
preserved. Another way to see this is to use the quantum random
walk: let $Q_1,Q_2$, and $Q_{12}$ be the maps $vN(G)\rightarrow
vN(G)$ defined by sending $\alpha(g)$ to
$\kappa_{1}(g)\alpha(g),$ $\kappa_{2}(g)\alpha(g)$ and
$\kappa_2(g)\kappa_{1}(g)\al(g)$ respectively. By construction,
$Q_n(\kappa_{1}),$ $Q_n(\kappa_{2}),$ and
$Q_n(\kappa_1\kappa_2)$ are the respective restrictions to
$Z(vN(G))$. Since $Q_1\circ Q_2=Q_{12}$, the result follows.

\end{proof}

Now specialize to $\kappa_{\xi}.$ Below, let $\xi^*$ denote $\xi$ with $\pm$ replaced by $\mp$; for example, $(\alpha^+)^*=\alpha^-$. This discrepancy is due to the (non--canonical) choice of the representation $\alpha$ as acting by left translation, rather than by right translation. 

 \begin{theorem}\label{QRW} For
any symbol $\xi$,
$Q_n(\kappa_{\xi})=T_n(\boldsymbol{1},F_{\xi^*})$. 
\end{theorem}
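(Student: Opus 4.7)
The plan is to mirror the three-step strategy used for Theorem~\ref{Theorem}: first, establish $Q_n(\kappa_{\xi})=T_n(\boldsymbol{1},F_{\xi})$ for the four Pieri-type generators $\xi\in\{\alpha^\pm,\beta^\pm\}$; second, show that the set of $F$ satisfying this identity is closed under products using the multiplicativity of both sides; third, obtain the $\gamma^\pm$ cases by the continuity argument $e^{tz}=\lim_{k\to\infty}(1+tz/k)^k=\lim_{k\to\infty}(1-tz/k)^{-k}$.

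For the base case, I would apply Proposition~\ref{QProp}(1) to rewrite
\[
Q_n(\kappa_{\xi})(\la,\mu)=\frac{\dim\mu}{\dim\la}\cdot\frac{1}{F_{\xi}(1)^n}\int_{U(n)}\chi_{\la}(g)\overline{\chi_{\mu}(g)}\prod_{j=1}^n F_{\xi}(\theta_j)\,dg,
\]
and then expand $\prod_j F_\xi(\theta_j)$ as a generating series in the four Pieri characters, using the identifications listed right after the Giambelli formula \eqref{Giambelli}: explicitly, $\prod_j F_{\alpha^+,q}(\theta_j)=\sum_k q^k\chi_{k0^{n-1}}$, $\prod_j F_{\beta^+,p}(\theta_j)=\sum_k p^k\chi_{1^k0^{n-k}}$, $\prod_j F_{\alpha^-,q}(\theta_j)=\sum_k q^k\chi_{0^{n-1}(-k)}$, and $\prod_j F_{\beta^-,p}(\theta_j)=\sum_k p^k\chi_{0^{n-k}(-1)^k}$. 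Term-by-term character orthogonality $\int_G\chi_\la\chi_\nu\overline{\chi_\mu}\,dg=c^\mu_{\la\nu}$ turns each integral into a sum of Littlewood--Richardson coefficients. For the two ``positive'' cases, Pieri's formula \eqref{Pieri1} or \eqref{Pieri2} evaluates $c^\mu_{\la\nu}$ directly; for the two ``negative'' cases, where $\nu$ has negative entries, one first applies the shift identity $c^\mu_{\la,\nu}=c^{\mu+\mathbf{c}}_{\la,\nu+\mathbf{c}}$ (a consequence of tensoring with $\det^c$) to reduce to Pieri's rule for partitions, exactly as in the $\alpha^-$ and $\beta^-$ subcases of Lemma~\ref{FirstStepTensor}. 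Matching the resulting indicator-plus-weight expression with Lemma~\ref{2.12} completes step one.

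For step two, a direct calculation shows $\kappa_{F_1F_2}(\bth)=\prod_j F_1(\theta_j)F_2(\theta_j)/(F_1(1)F_2(1))^n=\kappa_{F_1}(\bth)\kappa_{F_2}(\bth)$, so Proposition~\ref{QProp}(2) yields $Q_n(\kappa_{F_1F_2})=Q_n(\kappa_{F_1})Q_n(\kappa_{F_2})$, while Proposition~\ref{Multiply} supplies the matching identity $T_n(\boldsymbol{1},F_1F_2)=T_n(\boldsymbol{1},F_1)T_n(\boldsymbol{1},F_2)$. Hence the set of $F$ satisfying the theorem is closed under products, and in particular contains $(1+tz/k)^k$ and $(1-tz/k)^{-k}$ for every positive integer $k$, and similarly with $z$ replaced by $z^{-1}$. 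For step three, the Haar integral in \eqref{Qn1} is continuous in $\kappa$ in the uniform norm on $U(n)$, and the determinant defining $T_n(\boldsymbol{1},F)$ is continuous in $F$ through its Laurent coefficients $f(m)$; since the above approximants converge uniformly on any compact annulus containing the unit circle, passing to the limit yields $Q_n(\kappa_{\gamma^\pm})=T_n(\boldsymbol{1},F_{\gamma^\pm})$.

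I expect the main obstacle to be the two ``negative'' subcases of step one. There the Pieri character appearing in $\kappa_\xi$ is indexed by a highest weight with negative entries, so $c^\mu_{\la,\nu}$ cannot be read off Pieri's rule directly; the fix is the $\mathbf{1}$-shift trick outlined above, already worked out in Lemma~\ref{FirstStepTensor}, which transfers verbatim to the quantum setting since it uses only representation-theoretic facts about $U(n)$. Once those subcases are verified, steps two and three are essentially formal consequences of the multiplicativity of $Q_n$ (Proposition~\ref{QProp}(2)) and the continuity of both sides in $F$.
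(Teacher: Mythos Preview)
Your three-step architecture (base cases, closure under products, continuity) is exactly the paper's, and your steps two and three are identical in content to Lemmas~\ref{SecondStepQuantum} and~\ref{ThirdStepQuantum}. The genuine difference is in step one. The paper's Lemma~\ref{FirstStepQuantum} does not pass through Littlewood--Richardson coefficients at all: it plugs Weyl's integration formula into \eqref{Qn1}, expands both Vandermonde determinants as signed sums over $S_n\times S_n$, and then carries out a bare-hands case analysis of the ``contributions'' $\mathrm{Con}(\sigma,\tau)$ (three sub-steps for each of $\alpha^{\pm},\beta^{\pm}$) to identify exactly which pairs $(\lambda,\mu)$ survive and with what weight. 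Your route instead recognises $\prod_j F_\xi(\theta_j)$ as a generating series in the Pieri characters $\chi_{k0^{n-1}}$, $\chi_{1^k0^{n-k}}$ (or their negatives), applies character orthogonality to turn the integral into $c^{\mu}_{\lambda\nu}$, and reads off the answer from Pieri's rule and Lemma~\ref{2.12}.

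Both arguments are valid. Yours is shorter and more conceptual: it recycles the Pieri/LR calculations already worked out in Lemma~\ref{FirstStepTensor} and is essentially the same mechanism the paper later isolates in the Proposition following Theorem~\ref{QRW} (which shows $\sum_\mu Q_n(\kappa)(0,\mu)\,c^{\tau}_{\lambda\mu}\dim\tau/(\dim\lambda\dim\mu)=Q_n(\kappa)(\lambda,\tau)$ for all $\kappa$). The paper's direct determinant computation, by contrast, is self-contained---it does not invoke the Littlewood--Richardson rule at all---and would transplant to settings where no clean Pieri formula is available. So your proposal is correct, but the base-case argument is a genuinely different (and arguably cleaner) path than the one the paper takes.
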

\begin{proof} Start with:

\begin{lemma}\label{FirstStepQuantum} Theorem \ref{QRW} holds
for $\xi=\al^{\pm},\be^{\pm}$. \end{lemma} \begin{proof} By
Weyl's integration formula and \eqref{CharacterFormula},
equation \eqref{Qn1} implies $$ Q_n(\kappa)(\lambda,\mu) =
\frac{\dim\mu}{\dim\lambda}\frac{1}{n!} \int_{\mathbb{T}^n}
\det[\theta_i^{\lambda_j+n-j}]\overline{\det[\theta_i^{\mu_j+n-j}]}\kappa_{}(\theta_1,\ldots,\theta_n)
d\theta_1\cdots d\theta_n. $$ This integral can be written in complex analytic
notation. Since the maximal torus is
$$
\mathbb{T}^n = \{ \mathrm{diag}(z_1,\ldots, z_n) : | z_j | = 1 \} \subset U(n),
$$ 
$\mathbb{T}^n$ can therefore be identified with the $n$--fold product of the unit circle in $\mathbb{C}$, which will also be denoted $\mathbb{T}^n$. The Haar measure $d\theta$ on $\T$ is
$dz/2\pi iz,$ so the integral equals 

\begin{equation}\label{Qn}
Q_n(\kappa)(\lambda,\mu)=\frac{\dim\mu}{\dim\lambda}\frac{1}{n!}\left(\frac{1}{2\pi
i}\right)^n\int_{\mathbb{T}^n}
\det[z_i^{\lambda_j+n-j}]\overline{\det[z_i^{\mu_j+n-j}]}\kappa_{}(z_1,\ldots,z_n)
\frac{dz_1\cdots dz_n}{z_1\cdots z_n}. \end{equation}

Note that \begin{equation}\label{KappaAlpha+}
\kappa_{\al^+}(\boldsymbol{z}) = \prod_{j=1}^n
\frac{(1-qz_j)^{-1}}{(1-q)^{-1}} = \prod_{j=1}^n \frac{1+qz_j +
(qz_j)^2+\ldots}{(1-q)^{-1}} = \sum_{k=0}^{\infty} \frac{q^k
h_k(\boldsymbol{z})}{(1-q)^{-n}}. \end{equation} $$
\kappa_{\al^-}(\boldsymbol{z}) = \prod_{j=1}^n
\frac{(1-qz_j^{-1})^{-1}}{(1-q)^{-1}} = \prod_{j=1}^n
\frac{1+qz_j^{-1} + (qz_j^{-1})^2+\ldots}{(1-q)^{-1}} =
\sum_{k=0}^{\infty} \frac{q^k
h_k(\boldsymbol{z^{-1}})}{(1-q)^{-n}}. $$
and also
\begin{equation}\label{KappaBeta+}
\kappa_{\be^+}(\boldsymbol{z}) = \prod_{j=1}^n
\frac{1+pz_j}{1+p} = \sum_{k=0}^{\infty} \frac{p^k
e_k(\boldsymbol{z})}{(1+p)^n}. \end{equation} $$
\kappa_{\be^-}(\boldsymbol{z}) = \prod_{j=1}^n
\frac{1+pz_j^{-1}}{1+p} = \sum_{k=0}^{\infty} \frac{p^k
e_k(\boldsymbol{z}^{-1})}{(1+p)^n}. $$

By expanding the determinant in \eqref{Qn},
\begin{multline}\label{Q} Q_n(\kappa_{\xi})(\lambda,\mu)=
\frac{\dim\mu}{\dim\lambda}\frac{1}{n!}\left(\frac{1}{2\pi
i}\right)^n\int_{\mathbb{T}^n} \left (\sum_{\sigma\in
S_n}\text{sgn}(\si)z_{\si(1)}^{\lambda_1+n-1}\cdots
z_{\si(n)}^{\lambda_n}\right) \\ \times\left(\sum_{\tau\in
S_n}\text{sgn}(\tau)z_{\tau(1)}^{-\mu_1-n+1}\cdots
z_{\tau(n)}^{-\mu_n}\right)\kappa_{\xi}(z_1,\ldots,z_n)\frac{dz_1\cdots
dz_n}{z_1\cdots z_n} \end{multline} Observe that for any $\xi$,
the only contributions to the integral come from the constant
terms after expanding the product.

First consider when $\xi=\alpha^+$. Define the
\textit{contribution} from $\si$ and $\tau$ to be $$ \con=
\begin{cases} \sgn(\si)\sgn(\tau), \ \ &\text{if each} \ \
\lambda_j-j\leq \mu_{\tau^{-1}(\si(j))}-\tau^{-1}(\si(j)),\\ 0,
\ &\text{else}. \end{cases} $$ 
In words, given the two decreasing sequences $\lambda_j-j$ and $\mu_j-j$, we are looking at permutations of the second one which are entrywise smaller than the first one. Also observe that $\con$ depends only on $\tau^{-1} \sigma$ and thus  
\[\sum_{\sigma,\tau} \con= n! \sum_{\pi} \mathrm{Con}(\pi)\]
where, by definition, $\mathrm{Con}(\pi)=\con$ for any pair $(\sigma,\tau)$ with $\tau^{-1} \sigma=\pi$. 
By \eqref{KappaAlpha+} $$
Q_n(\kappa_{\al^+})(\lambda,\mu)=\frac{\dim\mu}{\dim\lambda}\frac{1}{n!}
\frac{q^{\sum_{i=1}^n \mu_i-\lambda_i}}{(1-q)^{-n}}
\sum_{\sigma,\tau}\con . $$

Proceed with three steps: \begin{enumerate}[I] \item If
$\mu_i<\lambda_i$ for some $1\leq i\leq n$, then $\con=0$ for
all $\sigma,\tau\in S_n$. \item If $\mu_i>\lambda_{i-1}$ for
some $1<i\leq n$, then $Q_n(\kappa)(\lambda,\mu)=0$. \item If
$\lambda\prec\mu$, then $\sum_{\pi} \mathrm{Con}(\pi)= 1$.
\end{enumerate}

For I, fix an $i$ such that $\mu_i<\lambda_i$ and suppose $\con$
is nonzero for some $\sigma,\tau\in S_n$. If there is a $j<i$
which satisfies $\tau(\sigma^{-1}(j))\geq i$, then
$\lambda_j-j\leq\mu_{\tau\sigma^{-1}(j)}-\tau\sigma^{-1}(j)\leq\mu_i-i$,
implying that $\mu_i > \lambda_j \geq \lambda_i$, which
contradicts $\mu_i<\lambda_i$. Therefore $\tau\sigma^{-1}$ sends
the set $\{1,2,\ldots,i-1\}$ to itself, so
$\tau(\si^{-1}(i))\geq i$. Thus $\lambda_i-i \leq
\mu_{\tau\sigma^{-1}(i)} - \tau\sigma^{-1}(i) \leq \mu_i-i$, so
$\lambda_i\leq\mu_i$. Again, this is a contradiction. Therefore,
the only possibility is that all $\con$ are zero.

For II, suppose that $Q_n(\kappa)(\lambda,\mu)\neq 0$. Fix an $i$
such that $\mu_i>\lambda_{i-1}$. I claim that for some $j\leq
i$, there is no $k$ such that
$\mu_j-j<\lambda_k-k\leq\mu_{j-1}-(j-1)$. This is simply because
there are $i-1$ intervals $(\mu_j-j,\mu_{j-1}-(j-1)]$, but only
$i-2$ numbers $\lambda_k-k$ that can fit into these intervals,
so at least one interval must be empty. The claim implies that
the inequality $\lambda_k-k\ \leq \mu_j-j$ holds if and only if
the inequality $\lambda_k-k\leq \mu_{j-1}-(j-1)$ holds.
Therefore $\con+\text{Con}(\sigma,(j\ \ \ j-1)\cdot\tau)=0$, so
the sum $\sum_{\si,\tau}\con$ is zero.

For III, suppose that $\con\neq 0$. Then, using that
$\lambda\prec\mu$, a strong induction argument on $j$ implies
that $\tau^{-1}\sigma(j)=j$ for all $j$. In other words,
$\con\neq 0$ implies that $\si=\tau$. Since the converse is
immediate, the sum $\sum_{\si,\tau} \con$ simplifies to
$\sum_{\si\in S_n} \text{Con}(\si,\si)$, which equals $\vert
S_n\vert = n!$.

Together, I, II and III imply that $$
Q_n(\kappa_{\al^+})(\lambda,\mu)=\frac{q^{\sum_{i=1}^n
\mu_i-\la_i}}{(1-q)^{-n}}
\frac{\dim\mu}{\dim\lambda}1_{\lambda\prec\mu}, $$ which is just
$T_n(\boldsymbol{1},F_{\al^-})$.

Now move on to $\xi=\be^+$. Define the \textit{contribution}
from $\si$ and $\tau$ to be $$ \conb = \begin{cases}
\sgn(\si)\sgn(\tau), \ \ &\text{if each}\ \
\la_j-j-(\mu_{\ts}-\ts)\in \{0,-1\}\\ 0, \ &\text{else},
\end{cases} $$ so that 
\begin{align*}
Q_n(\kappa_{\be^+})(\lambda,\mu)&=\frac{\dim\mu}{\dim\lambda}\frac{1}{n!}
\frac{p^{\sum_{i=1}^n \mu_i-\lambda_i}}{(1+p)^n}
\sum_{\sigma,\tau}\conb \\
&=\frac{\dim\mu}{\dim\lambda}
\frac{p^{\sum_{i=1}^n \mu_i-\lambda_i}}{(1+p)^n}
\sum_{\pi} \mathrm{Con}'(\pi), 
\end{align*}
where as before $\mathrm{Con}'(\tau\sigma^{-1})=\mathrm{Con}'(\sigma,\tau)$.

Again we prove three steps: \begin{enumerate} \item If
$\mu_i<\lambda_i$ for some $1\leq i\leq n$, then $\conb=0$ for
all $\sigma,\tau\in S_n$. \item If $\mu_i-\lambda_{i}\notin
\{0,1\}$ for some $1<i\leq n$, then
$Q_n(\kappa)(\lambda,\mu)=0$. 
\item If all $\mu_i-\lambda_i$ are
$0$ or $1$, then $\sum_{\pi} \mathrm{Con}'(\pi)= 1$.
\end{enumerate}

For 1, notice that $\con=0$ implies that $\conb=0$, and I above
shows that $\con$ is always $0$.

For 2, we already know that $Q_n(\kappa)(\la,\mu)=0$ if some
$\mu_i<\la_i$, so we can assume that all $\mu_i\geq\la_i$. Now
fix some $j$ such that $\mu_j-\la_j\geq 2$, and suppose
$\conb\neq 0$. Then $\ts\leq j$ would imply $\mu_{\ts}-\ts \geq
\mu_j-j$, which implies that
$\la_j-j-(\mu_{\ts}-\ts)\leq\la_j-\mu_j\leq -2$, which
contradicts $\conb\neq 0$. So $\ts>j$. Thus there must be some
$i>j$ such that $\tau\si^{-1}(i)\leq j$ (or else $\tau\si^{-1}$
would map $\{j,\ldots,n\}$ to $\{j+1,\ldots,n\}$). This implies
that $\la_i-i < \la_j - j \leq \mu_j-j-2 \leq
\mu_{\tau\si^{-1}(i)}-\tau\si^{-1}(i) -2$, which again
contradicts $\conb\neq 0$. Thus, $\conb$ must always be zero.

For 3, suppose that $\mathrm{Con}'(\pi)\neq 0$ with $\pi \neq \mathrm{id}$, and let
$j$ be the smallest integer such that $\pi(j)\neq j$. Then
$\pi(j)>j$ and there is some $i>j$ such that $\pi(i)=j$.
This implies that $\la_i-i < \la_j - j \leq \mu_{\pi(j)}-\pi(j)<
\mu_j-j =\mu_{\pi(i)}-\pi(i)$, which implies
that
$\lambda_i-i-(\mu_{\pi(i)}-\pi(i))\leq
-2$, which is a contradiction. Therefore $\mathrm{Con}'(\pi)=1$ exactly when
$\pi$ is the identity permutation, and the result follows.

For the $\alpha^-$ case, it is almost identical to the
$\alpha^+$ case.

Now move on to the $\beta^-$ case. Since $$
e_k(\boldsymbol{z}^{-1})=z_1^{-1}\ldots z_n^{-1}
e_{n-k}(\boldsymbol{z}), $$ the contribution is now $$
\text{Con}''(\si,\tau) = \begin{cases} \sgn(\si)\sgn(\tau), \ \
&\text{if each}\ \ \la_j-j-(\mu_{\ts}-\ts)\in \{0, 1\}\\ 0, \
&\text{else}, \end{cases} $$ and $$
Q_n(\kappa_{\be^-})(\lambda,\mu)=\frac{\dim\mu}{\dim\lambda}\frac{1}{n!}
\frac{p^{\sum_{i=1}^n \la_i-\mu_i}}{(1+p)^n}
\sum_{\sigma,\tau}\text{Con}''(\si,\tau) . $$ From here, the
proof is the essentially identical as the $\be^+$ case, except
with negative signs inserted and inequalities reversed.
\end{proof}

\begin{lemma}\label{SecondStepQuantum} If Theorem \ref{QRW}
holds for two functions $F_1$ and $F_2$, then it holds for
$F_1F_2$. \end{lemma} \begin{proof} This follows from
Proposition \ref{QProp}. \end{proof}

\begin{lemma}\label{ThirdStepQuantum} If Theorem \ref{QRW} holds
for a sequence of functions ${F_k}$ which converge uniformly to
a function $F$ on $A$, then the theorem also holds for $F$.
\end{lemma} \begin{proof} With $f_k$ defined as in
\eqref{DefnOff}, it is immediate that $f_k$ converges to $f$
uniformly. Since the determinant is a continuous function of its
entries, $T_n(\boldsymbol{1};F_k)(\la,\mu)$ converges to
$T_n(\boldsymbol{1};F)(\la,\mu)$. By \eqref{Qn},
$Q_n(\kappa_{F_k})(\la,\mu)$ converges to
$Q_n(\kappa_F)(\la,\mu)$ as well. \end{proof}

Finally, since $e^{x}=\lim_{k\rightarrow\infty} (1+ x/k)^k =
\lim_{k\rightarrow\infty} (1-x/k)^{-k}$, Lemmas
\ref{FirstStepQuantum}, \ref{SecondStepQuantum} and
\ref{ThirdStepQuantum} finish the proof of Theorem \ref{QRW}.
\end{proof}

Let us also prove a statement similar to Theorem \ref{Theorem}.

\begin{proposition} For $\kappa\in L^2(G,\C)^G$,
\begin{equation*} \sum_{\mu\in\GT_n}
Q_n(\kappa)(0,\mu)\LR\frac{\dim\tau}{\dim\la\dim\mu} =
Q_n(\kappa)(\lambda,\tau). \end{equation*} \end{proposition}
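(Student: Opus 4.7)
The plan is to reduce everything to Proposition~\ref{QProp}(1) and the Fourier expansion \eqref{FT} together with the definition of the Littlewood--Richardson coefficients.

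First, I would simplify the initial factor $Q_n(\kappa)(0,\mu)$. Taking $\lambda=0$ in \eqref{Qn1} and using $\chi_0\equiv 1$ and $\dim 0=1$, I obtain
\begin{equation*}
Q_n(\kappa)(0,\mu)=\dim\mu\cdot\widehat{\kappa}(\mu),
\end{equation*}
where $\widehat{\kappa}(\mu)=\int_G\kappa(g)\overline{\chi_\mu(g)}\,dg$ is the Fourier coefficient from before \eqref{FT}. Substituting into the left--hand side, the factors of $\dim\mu$ cancel and one is left with
\begin{equation*}
\text{LHS}=\frac{\dim\tau}{\dim\lambda}\sum_{\mu\in\GT_n}\widehat{\kappa}(\mu)\,\LR .
\end{equation*}

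Second, I would attack the right--hand side by applying \eqref{Qn1} directly to $Q_n(\kappa)(\lambda,\tau)$ and then expanding $\kappa$ by its Fourier series \eqref{FT}:
\begin{equation*}
Q_n(\kappa)(\lambda,\tau)=\frac{\dim\tau}{\dim\lambda}\sum_{\mu\in\GT_n}\widehat{\kappa}(\mu)\int_G\chi_\lambda(g)\chi_\mu(g)\overline{\chi_\tau(g)}\,dg .
\end{equation*}
The inner integral is evaluated by writing $\chi_\lambda\chi_\mu=\sum_\sigma c_{\lambda\mu}^{\sigma}\chi_\sigma$ (the defining identity for the Littlewood--Richardson coefficients from Section~\ref{TP}) and then invoking orthonormality $\int_G\chi_\sigma\overline{\chi_\tau}\,dg=\delta_{\sigma\tau}$ from the Peter--Weyl theorem. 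This picks out exactly $c_{\lambda\mu}^{\tau}=\LR$, so the right--hand side equals the expression derived for the left--hand side, finishing the proof.

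There is no real obstacle here; the only minor point to verify is the interchange of sum and integral when expanding $\kappa$, which is justified by $\kappa\in L^2(G,\C)^G$ and the $L^2$ convergence of its character expansion (only finitely many terms contribute to the pairing against $\chi_\lambda\overline{\chi_\tau}$ for fixed $\lambda,\tau$, since $\chi_\lambda\overline{\chi_\tau}$ has finite character expansion). Conceptually, the statement is just the observation that $Q_n(\kappa)$ acts on the ``matrix coefficient'' $\chi_\lambda/\dim\lambda$ by multiplication by $\kappa$ followed by projection, which is the same computation that underlies Proposition~\ref{QProp}(2).
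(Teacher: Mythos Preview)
Your proof is correct and follows essentially the same route as the paper: the paper invokes linearity to reduce to the case $\kappa=\chi_\beta$, which is exactly your Fourier expansion \eqref{FT} in disguise, and then both arguments use \eqref{Qn1}, the defining identity $\chi_\lambda\chi_\mu=\sum_\sigma c_{\lambda\mu}^{\sigma}\chi_\sigma$, and orthonormality of characters to pick out $c_{\lambda\mu}^{\tau}$. Your remark that only finitely many Fourier coefficients actually contribute (since $\chi_\lambda\overline{\chi_\tau}$ has finite character expansion) is the precise justification for that linearity reduction.
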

\begin{proof} By linearity, it suffices to prove the result when
$\kappa=\chi_{\be}$. By \eqref{Qn1}, \begin{align*}
\sum_{\mu\in\GT_n}
Q_n(\chi_{\be})(0,\mu)\LR\frac{\dim\tau}{\dim\la\dim\mu} &=
\sum_{\mu\in\GT_n}
\frac{\dim{\tau}}{\dim{\lambda}}\LR\int_{U(n)}
\overline{\chi_{\mu}(g)}\chi_{\be}(g)dg\\
&=\frac{\dim{\tau}}{\dim{\lambda}} c_{\la\be}^{\tau}.
\end{align*} On the other side, \begin{align*}
Q_n(\chi_{\be})(\lambda,\tau) &=
\frac{\dim{\tau}}{\dim{\lambda}}\int_{U(n)}
\chi_{\lambda}(g)\overline{\chi_{\tau}(g)}\chi_{\be}(g)dg\\
&=\frac{\dim{\tau}}{\dim{\lambda}}\int_{U(n)}
\overline{\chi_{\tau}(g)}\sum_{\mu\in\W} c_{\la\be}^{\mu} \cdot
\chi_{\mu}(g)dg \\ &=\frac{\dim{\tau}}{\dim{\lambda}}
c_{\la\be}^{\tau}. \end{align*} \end{proof}

\subsection{Restriction to Maximal Torus} The purpose of this
subsection is to demonstrate that there is a natural
representation theoretic reason for the occurrence of tensor
products in the transition probabilities. To see this, we will
consider the restriction of the quantum random walk to the von
Neumann algebra of the maximal torus. This is a natural
restriction to consider: in \cite{KOR}, it is shown that the
Krawtchouk and Charlier processes are Doob $h$--transforms of
Bernoulli and exponential random walks; while in \cite{kn:Bi2},
it is shown that for representations whose highest weight is miniscule, the restriction of the quantum random walk to the
center is the Doob $h$--transform of the restriction to the
maximal torus.

Restricting the highest weight representation $V_{\la}$ to
$\T^n$ yields a decomposition into one--dimensional subspaces
\begin{equation}\label{Decomp} V_{\la}=\bigoplus_{x\in P}
U_x^{\oplus n_{\la}(x)}, \end{equation} where $$ U_x=\{v\in
V_{\la}: \bth\cdot v = x(\bth)v \quad \text{for all} \quad
\bth\in \T^n\} $$ and $n_{\la}(x)$ are non--negative integers.
In terms of characters, this means that $$
\chi_{\la}(\bth)=\sum_{x\in P} n_{\la}(x)x(\bth). $$ For
$\kappa\in L^2(G,\C)^G$, define $n_{\kappa}(x)$ by linear
extension, i.e. $$ n_{\kappa}(x) = \sum_{\la\in\W}
\widehat{\kappa}(\la) n_{\la}(x). $$

Let $\mathcal{T}$ be the subalgebra of $vN(G)$ generated by
$\{\al(\theta):\theta\in\T^n\}$. Since every element of $G$ is
conjugate to exactly one element of $\T^n$, we can decompose the
Haar measure on $G$ as a measure on $\T^n\times\T^n\backslash
G$. Thus $L^2(G,dg)\cong L^2(\T^n,d\theta)\otimes L^2(\T^n
\backslash G)$, where $d\theta$ is Haar measure on $\T^n$. With
this isomorphism, $\alpha(\theta)$ acts as the identity element
on $L^2(\T^n \backslash G)$. Therefore $\mathcal{T}$ is
isomorphic to the group von Neumann algebra of $\T^n$.

Since the character group of $\T^n$ is isomorphic to $P$, there
is an isomorphism of $W^*$--algebras $\ze:\mathcal{T}\rightarrow
L^{\infty}(P)$ such that $\ze(\al(\theta))$ sends $x\in P$ to
$e(x)(\theta)$. Since $Q_n(\kappa)$ sends $\mathcal{T}$ to itself, the map
$\ze\circ Q_n(\kappa)\circ \ze^{-1}$ defines a classical Markov chain with
state space $P$, assuming that $\kappa$ is normalized so that $\kappa(\mathrm{Id})=1$. Identify $P$ with $\Z^n$ naturally, and write
$P_n(\kappa)(x,y),x,y\in \Z^n$ for the transition matrix of this
Markov chain.

\begin{proposition}\label{Exchange} 1. For any $\kappa\in
L^2(G,\C)^G$, \begin{equation}\label{Peqn} P_n(\kappa)(x,y) =
n_{\kappa}(y-x) \end{equation} Furthermore, for any $\sigma$ in
the Weyl group, $P_n(\kappa)(x,y)=P_n(\kappa)(\sigma x,\sigma
y)$.

2. The map $P_n:BC(G,\C)^G\rightarrow Mat(P\times P)$ is a
morphism of $^*-$ algebras.

\end{proposition}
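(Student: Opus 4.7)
The strategy is to transport the action of $Q$ on $\mathcal{T}$ through both isomorphisms (first the $\zeta$ identification of $\mathcal{T}$ with $L^\infty(P)$, then Fourier expansion on $\T^n$) and read off the Markov kernel. The key observation is that $Q$ acts on $\mathcal{T}$ simply by multiplication by $\kappa$, so after passing to $L^\infty(P)$ it becomes a convolution operator, which is exactly the translation-invariant kernel $n_\kappa(y-x)$.

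For part 1, I would apply $\zeta\circ Q\circ\zeta^{-1}$ to the element $\zeta(\alpha(\theta))\in L^\infty(P)$, namely the function $x\mapsto x(\theta)$, in two ways. On the one hand, by definition it equals $\zeta(\kappa(\theta)\alpha(\theta))$, i.e.\ the function $x\mapsto \kappa(\theta)x(\theta)$; substituting the Fourier expansion $\kappa(\theta)=\sum_z n_\kappa(z)z(\theta)$ and the identity $z(\theta)x(\theta)=(z+x)(\theta)$ yields $x\mapsto\sum_y n_\kappa(y-x)y(\theta)$. On the other hand, as the Markov operator with kernel $P_n(\kappa)$, the same object equals $x\mapsto\sum_y P_n(\kappa)(x,y)y(\theta)$. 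Comparing these two Fourier series in $\theta$ and invoking linear independence of the characters of $\T^n$ yields \eqref{Peqn}. For Weyl invariance, since $\kappa$ is a class function its restriction to $\T^n$ is $S_n$-invariant, and substituting into the expansion $\kappa(\theta)=\sum_z n_\kappa(z)z(\theta)$ gives $n_\kappa(\sigma z)=n_\kappa(z)$; hence $P_n(\kappa)(\sigma x,\sigma y)=n_\kappa(\sigma(y-x))=n_\kappa(y-x)=P_n(\kappa)(x,y)$.

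For part 2, linearity is immediate from \eqref{Peqn}. For $*$-preservation one conjugates the Fourier expansion of $\kappa$ to obtain $n_{\bar\kappa}(z)=\overline{n_\kappa(-z)}$, whence $P_n(\bar\kappa)(x,y)=n_{\bar\kappa}(y-x)=\overline{n_\kappa(x-y)}=\overline{P_n(\kappa)(y,x)}=P_n(\kappa)^*(x,y)$. Multiplicativity $P_n(\kappa_1\kappa_2)=P_n(\kappa_1)P_n(\kappa_2)$ can be established in two equivalent ways: directly, using that pointwise products of functions on $\T^n$ correspond to convolutions of their Fourier coefficients, so $n_{\kappa_1\kappa_2}=n_{\kappa_1}*n_{\kappa_2}$, and the translation invariance in \eqref{Peqn} turns matrix multiplication into convolution; alternatively, by the same functorial argument used in Proposition \ref{QProp}, namely that on $vN(G)$ the maps $\alpha(g)\mapsto \kappa_i(g)\alpha(g)$ compose to $\alpha(g)\mapsto\kappa_1(g)\kappa_2(g)\alpha(g)$, and both restriction to $\mathcal{T}$ and the isomorphism $\zeta$ respect the algebra structure.

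I do not anticipate a genuine obstacle; everything is bookkeeping once the Fourier-analytic viewpoint on $\mathcal{T}\cong L^\infty(P)$ is fixed. The one mildly subtle point is convergence of the series $\kappa(\theta)=\sum_z n_\kappa(z)z(\theta)$ when $\kappa$ is only bounded continuous, but since $G$ is compact we have $BC(G,\C)^G\subset L^2(G,dg)^G$, so the Fourier coefficients $n_\kappa(z)$ are well defined and uniquely determined by $\kappa$, which is all the argument uses.
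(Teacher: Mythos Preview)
Your proposal is correct and follows essentially the same approach as the paper. The only organizational difference is that the paper invokes an integral formula from \cite{kn:B} for $P_n(\kappa)(x,y)$ and then expands $\kappa$ inside the integral, whereas you compute the operator $\zeta\circ Q\circ\zeta^{-1}$ on the functions $x\mapsto x(\theta)$ and read off the kernel by linear independence of characters; these are Fourier-dual versions of the same calculation, and your treatment of Weyl invariance and of part 2 (including both the convolution and functorial arguments for multiplicativity) matches the paper's.
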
 \begin{proof}

1. By Proposition 3.1 in \cite{kn:B}, \begin{equation}\label{Pn}
P_n(\kappa)(x,y)= \int_{\T^n}
e(x)(\theta)\overline{e(y)(\theta)}\kappa(\theta)d\theta,
\end{equation} which implies that \begin{align*} P_n\left(
\kappa\right)(x,y) &=\int_{\T^n} \overline{e(y-x)(\theta)}
\sum_{\la\in\W} \widehat{\kappa}(\la)\chi_{\la}(\theta)d\theta\\
&= \int_{\T^n}
\overline{e(y-x)(\theta)}\sum_{\la\in\W}\widehat{\kappa}(\la)\sum_{z\in
P} n_{\la}(z)\cdot e(z)(\theta)d\theta\\ &=\int_{\T^n}
\overline{e(y-x)(\theta)}\cdot
e(y-x)(\theta)\sum_{\la\in\W}\widehat{\kappa}(\la) n_{\la}(y-x)
d\theta\\ &= \sum_{\la\in\W} \widehat{\kappa}(\la) n_{\la}(y-x)
= n_{\kappa}(y-x). \end{align*} Furthermore, since the weight
multiplicities are invariant under the action of the Weyl group,
it follows that the transition probabilities are invariant under
the Weyl group.

2. The fact that $P_n$ is linear and preserves $^*$ follows from
\eqref{Pn}. Since $\sum_{z\in P}
e(z)(\theta)\overline{e(z)(\theta')}$ is the Dirac delta
$\delta_{\theta\theta'^{-1}}$, it follows that from \eqref{Pn}
multiplication is also preserved. This can also be seen from the
construction of the quantum random walk, as in the proof of
Proposition \ref{QProp}.2.

There is also a proof which illuminates the occurrence of tensor
products. To show that $P_n$ preserves multiplication, by
\eqref{Peqn} it suffices to show that the map $n:
BC(G,\C)^G\rightarrow B(P,\C)$ defined by $n(\kappa)=n_{\kappa}$
from bounded, continuous complex--valued class functions on $G$
to bounded complex--valued functions on $P$ is a morphism of
$^*$--algebras, where the multiplication in $B(P,\C)$ is usual
convolution. By definition, $n$ is linear, so it suffices to
show that $$ n_{\chi_{\la}\chi_{\mu}}=n_{\chi_{\la}}*
n_{\chi_{\mu}}. $$ Letting $W(\pi)$ denote the multiset of
weight multiplicities (i.e. the number of times that $x\in P$
appears in $W(\pi)$ is $n_{\chi_{\pi}}(x)$, which is the
multiplicity of the weight $x$ in the representation $V_{\pi}$),
this is equivalent to $$ W(\pi_1\otimes\pi_2) = W(\pi_1)+
W(\pi_2), $$ where $A+B$ denotes the usual addition of
multisets, $A+B=\{a+b:a\in A,b\in B\}$. However, by
\eqref{Decomp}, this follows immediately. \end{proof}

\bibliographystyle{plain}  \end{document}